\theoremstyle{theorem}
\newtheorem{theorem}{Theorem}
\newtheorem{lemma}{Lemma}
\newtheorem*{conjecture}{Conjecture}
\theoremstyle{definition}
\begin{document}

\title[Integer Ratios of Consecutive Alternating Power Sums]{Integer Ratios of Consecutive Alternating\\ Power Sums}

\author{Ioulia N. Baoulina}

\address{Department of Mathematics, Moscow State Pedagogical University, Krasnoprudnaya str. 14, Moscow 107140, Russia}
\email{jbaulina@mail.ru}

\date{}

\maketitle

\thispagestyle{empty}

\begin{abstract}
We give a characterization of all pairs $(k,n)$ of positive integers for which the ratio $(1^k-2^k+3^k-\dots+(-1)^{n+1} n^k)/(1^k-2^k+3^k-\dots+(-1)^{n}(n-1)^k)$ of two consecutive alternating power sums is an integer.
\end{abstract}

\section{Introduction}

Power sums have attracted the attention of mathematicians for centuries. Although many of their properties are well known, there are still open questions. One of the unsolved problems is the following: find all positive integers $k$ and $n>1$ such that the ratio
$$
\frac{1^k+2^k+\dots+n^k}{1^k+2^k+\dots+(n-1)^k}
$$
is an integer. Put $S_k(n)=\sum_{j=1}^{n-1} j^k$. Clearly, $S_k(3)/S_k(2)=2^k+1$ is always an integer. Further, it is not difficult to show that $S_k(4)/S_k(3)$ is an integer if and only if $k=1$ or $3$ (see \cite{baoulina-moree} or \cite{moree2}). In 2011, Kellner~\cite{kellner} formulated the following conjecture.

\begin{conjecture}[Kellner, 2011]
Let $k\ge 1$ and $n\ge 3$ be integers. Then the ratio\linebreak ${S_k(n+1)/S_k(n)}$ is a integer if and only if $n=3$ and $k=1$ or $3$.
\end{conjecture}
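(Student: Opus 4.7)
The first step is to reformulate: since $S_k(n+1)=S_k(n)+n^k$, the ratio $S_k(n+1)/S_k(n)$ is an integer if and only if $S_k(n)\mid n^k$, i.e.\ if and only if every prime divisor of $S_k(n)$ divides $n$, with $p$-adic valuations bounded by $k\,v_p(n)$. So the task reduces to deciding when the tail $S_k(n)$ is sufficiently $n$-smooth.

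To pin down the size regime, I would invoke Faulhaber's formula
$$
S_k(n)=\frac{n^{k+1}}{k+1}-\frac{n^k}{2}+\frac{1}{k+1}\sum_{j=2}^{k}\binom{k+1}{j}B_j\,n^{k+1-j},
$$
whose leading behaviour gives $S_k(n)>n^k$ as soon as $n$ is sufficiently large relative to $k$ (a careful two-term estimate yields $n\gtrsim \tfrac{3}{2}(k+1)$). This already precludes integrality in that regime, and reduces the conjecture to handling, for each fixed small $n\ge 3$, the infinite family of $k$ for which $n$ still lies in the admissible range.

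The case $n=3$ is self-contained: the condition becomes $1+2^k=3^m$. For even $k\ge 2$ one has $1+2^k\equiv 2\pmod 3$, ruling them out; for odd $k$ the Lifting-the-Exponent lemma furnishes $v_3(1+2^k)=1+v_3(k)$, and the inequality $1+2^k>3^{1+v_3(k)}$ for $k\ge 5$ leaves only $k\in\{1,3\}$. The case $n=4$ reduces to $1+2^k+3^k$ being a power of $2$, which is impossible: for even $k$ one has $3\mid 1+2^k+3^k$, and for odd $k\ge 3$ a short $2$-adic computation yields $v_2(1+2^k+3^k)=2$ while the value itself exceeds $4$.

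For every remaining $n\ge 5$ my strategy would be to exhibit, for each admissible $k$, a prime $p\le n-1$ with $p\nmid n$ that divides $S_k(n)$. When $p$ lies in the range $\bigl((n-1)/2,\,n-1\bigr]$ and $(p-1)\nmid k$, the classical identity $\sum_{j=1}^{p-1}j^k\equiv 0\pmod p$ reduces $S_k(n)\bmod p$ to a short residual partial power sum, which for generic $k$ is nonzero mod $p$. The main obstacle, and the reason the statement remains a conjecture, is the case where $p-1$ divides $k$ for every available witness prime $p$: the mod-$p$ approach then degenerates and one must descend into Kummer-type congruences and von Staudt--Clausen-type control over the Bernoulli numbers appearing in Faulhaber's formula. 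Producing a witnessing prime uniformly in all such $(k,n)$ appears to be exactly the missing ingredient, and I expect this to be where any complete proof has to do its real work.
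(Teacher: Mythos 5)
You are attempting to prove a statement that the paper itself presents only as a conjecture: immediately after stating it, the authors write that the conjecture ``remains open'' and cite only partial results. So there is no proof in the paper to compare yours against, and no complete proof is currently known; your writeup is candid about this, and you locate the difficulty in the right place. After the size reduction (your Faulhaber estimate, which correctly confines the problem to $n=O(k)$) and the finite checks at $n=3$ and $n=4$, one must show for every $n\ge 5$ and every admissible $k$ that $S_k(n)$ fails to divide $n^k$, and no uniform construction of a witnessing prime is known. The partial results of \cite{baoulina-moree} quoted in the introduction --- that for $n>3$ an integral ratio forces either $S_k(n+1)/S_k(n)=2$ or $S_k(n+1)/S_k(n)>1500$ with $(S_k(n+1)/S_k(n))-1$ a product of irregular primes --- come precisely from the von Staudt--Clausen/Kummer-type control you anticipate needing, so your diagnosis of where the real work lies is accurate; but the gap you acknowledge is the entire content of the conjecture.

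Two local points on the parts you did argue. The reduction to $S_k(n)\mid n^k$ and the $n=3$ case ($1+2^k=3^m$, handled by the mod $3$ obstruction for even $k$ and lifting the exponent for odd $k$) are correct. In the $n=4$ case, however, you have the parities reversed: for \emph{even} $k$ one has $2^k\equiv 1\pmod 3$, hence $1+2^k+3^k\equiv 2\pmod 3$, and it is for \emph{odd} $k$ that $3$ divides $1+2^k+3^k$ (e.g.\ $k=2$ gives $14$, $k=3$ gives $36$). As written, your even-$k$ branch proves nothing. The fix is easy: for even $k\ge 2$ one has $1+2^k+3^k\equiv 2\pmod 4$ and the sum exceeds $2$, so it is not a power of $2$; your $2$-adic valuation computation $v_2(1+2^k+3^k)=2$ is correct but belongs to the odd-$k$ branch, where the mod $3$ argument already suffices. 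These slips are repairable; the unresolved case $n\ge 5$ is not, and the statement should be treated as open rather than proved.
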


This conjecture remains open, although various partial results are known (see \cite{baoulina-moree}). In particular, it has been proved in \cite{baoulina-moree} that if $n>3$ and $S_k(n+1)/S_k(n)$ is an integer, then either $S_k(n+1)/S_k(n)=2$ or $S_k(n+1)/S_k(n)>1500$. In the latter case,\linebreak $(S_k(n+1)/S_k(n))-1$ must be a product of irregular primes.

In this note, we consider a similar problem for alternating power sums. Namely, we are looking for positive integers $k$ and $n>1$ such that the ratio
$$
\frac{1^k-2^k+3^k-\dots+(-1)^{n+1} n^k}{1^k-2^k+3^k-\dots+(-1)^{n}(n-1)^k}
$$
is an integer. Put $A_k(n)=(-1)^n\sum_{j=1}^{n-1} (-1)^{j+1} j^k$. With this notation,
$$
\frac{1^k-2^k+3^k-\dots+(-1)^{n+1} n^k}{1^k-2^k+3^k-\dots+(-1)^{n}(n-1)^k}=-\frac{A_k(n+1)}{A_k(n)},
$$
and so the problem above can be reformulated as follows: find all positive integers $k$ and $n>1$ such that the ratio $A_k(n+1)/A_k(n)$ is an integer. It is readily seen that
\begin{align*}
A_k(n)&=(n-1)^k-(n-2)^k+(n-3)^k-\dots+(-1)^{n+2}\cdot 1^k\\
&\ge\begin{cases}
(n-1)/2 & \text{if $n$ is odd,}\\
n/2 & \text{if $n$ is even,}
\end{cases}
\end{align*}
and so $A_k(n)>0$ for $n>1$. In view of this, it is more convenient to work with $A_k(n)$.

Unlike Kellner's conjecture, the problem for alternating power sums is not so difficult, and we are able to give a complete solution. Our main result is the following theorem.

\begin{theorem}
\label{main}
Let $k\ge 1$ and $n>1$ be integers. Then the ratio $A_k(n+1)/A_k(n)$ is an integer if and only if at least one of the following conditions holds:

\textup{(a)} $n=2$;

\textup{(b)} $k=1$ and $n$ is even;

\textup{(c)} $k=1$ or $2$ and $n=3$.

\end{theorem}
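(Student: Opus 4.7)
The plan is to recast divisibility and then squeeze $n$. From the definitions one has immediately $A_k(n+1)+A_k(n)=n^k$, so
\[
\frac{A_k(n+1)}{A_k(n)}=\frac{n^k}{A_k(n)}-1,
\]
and the ratio is an integer if and only if $A_k(n)\mid n^k$. I then establish $A_k(n)<n^k/2$ for $k\ge 2$ and $n\ge 2$ by the regrouping
\[
n^k-2A_k(n)=\sum_{a}\bigl[a^k-2(a-1)^k+(a-2)^k\bigr]+(\text{positive boundary}),
\]
where each second difference is nonnegative by convexity of $x\mapsto x^k$ (strictly positive for $k\ge 2$), and the boundary term is $1$ when $n$ is odd and $2^k-2$ when $n$ is even. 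Hence any integer value of the ratio is at least $2$. The case $k=1$ is immediate from $A_1(n)=\lfloor n/2\rfloor$, yielding (b) and the $k=1$ case of (c).

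For $k\ge 2$, the engine is the generating function identity
\[
\sum_{k\ge 0}A_k(n)\frac{t^k}{k!}=\frac{e^{nt}+(-1)^n e^t}{1+e^t},
\]
which gives $2A_k(n)=E_k(n)+(-1)^{n+1}E_k(0)$, where $E_k$ is the Euler polynomial. Combined with $E_k(0)=E_k(1)=0$ for $k\ge 2$ even and $E_k'(0)=kE_{k-1}(0)=E_k'(1)=0$ for $k\ge 3$ odd, plus a short 2-adic check (using von Staudt--Clausen on the Bernoulli numbers appearing in $E_k(0)$) that the relevant quotient polynomials take integer values at integer arguments, this yields
\begin{align*}
& n(n-1)\mid 2A_k(n) && (k\text{ even}),\\
& (n-1)^2\mid 4A_k(n) && (k\text{ odd},\ n\text{ odd}),\\
& n^2\mid 4A_k(n) && (k\text{ odd},\ n\text{ even}).
\end{align*}
In the first two cases, combining with $A_k(n)\mid n^k$ and $\gcd(n,n-1)=1$ forces $n-1\mid 2$, so $n\in\{2,3\}$.

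The hardest case is $k$ odd with $n$ even, since there $n^2\mid 4A_k(n)$ combined with $A_k(n)\mid n^k$ gives only $n^2\mid 4n^k$, which is automatic. To close it I write $A_k(n)=(n/2)^2 T_k(n)$ where $T_k$ is a polynomial of degree $k-2$ with leading coefficient $2$ and integer values at even $n$; the condition becomes $T_k(n)\mid 4n^{k-2}$. A single Euclidean step $4n^{k-2}=2T_k(n)+r_k(n)$ yields $r_k$ of degree $k-3$ and leading coefficient $2k$, so $T_k(n)\mid r_k(n)$; this fails on size grounds as soon as $n>k$, since $|T_k(n)|\sim 2n^{k-2}$ dominates $|r_k(n)|\sim 2kn^{k-3}$, and the finitely many residual cases $n\le k$ are handled directly. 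For example, at $n=4$ one uses $3^k-2^k+1\equiv 4\pmod 8$ for $k\ge 3$ odd, which gives $v_2(A_k(4))=2$ while $A_k(4)>4$, so $A_k(4)\nmid 4^k$. Finally $n=2$ gives $A_k(2)=1\mid 2^k$ always, yielding (a); and $n=3$ reduces to $2^k-1\mid 3^k$, i.e.\ $2^k-1$ must be a power of $3$, which by $2^k-1\equiv 7\pmod 8$ for $k\ge 3$ happens exactly for $k\in\{1,2\}$, completing (c).
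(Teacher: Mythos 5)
Your reduction to $A_k(n)\mid n^k$, the handling of $k=1$, of $n=2,3$, and the use of the divisibilities $n(n-1)\mid 2A_k(n)$ ($k$ even) and $(n-1)^2\mid 4A_k(n)$ ($k$ odd, $n$ odd) together with $\gcd(n,n-1)=1$ all match the paper's argument in substance (the paper derives these congruences from von Staudt and Me\v strovi\'c rather than from Euler polynomials, but that is a cosmetic difference). The problem is your treatment of the hard case $k\ge 3$ odd, $n\ge 4$ even, where there is a genuine gap. Your Euclidean step only yields $T_k(n)\mid r_k(n)$, and your size argument disposes of this only when $n>k$. The leftover set $\{(k,n): 4\le n\le k\}$ is \emph{not} ``finitely many residual cases'': for each fixed even $n\ge 4$ it contains all odd $k\ge n$, so it is an infinite family, and exhibiting an ad hoc $2$-adic computation for $n=4$ does not dispose of $n=6,8,10,\dots$. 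Moreover, even the claim that $|T_k(n)|\sim 2n^{k-2}$ dominates $|r_k(n)|$ for $n>k$ is unsubstantiated as stated: the lower-order coefficients of $T_k$ and $r_k$ involve Euler/Bernoulli numbers, which grow superexponentially in $k$, so comparing only the leading terms does not show domination in the range $n$ comparable to $k$ without explicit coefficient bounds.

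The paper closes this case with a different and much cleaner combination of the same two ingredients you already have. From the refined congruence $A_k(n)\equiv n^2/4\pmod{n^2/2}$ one gets not merely $n^2\mid 4A_k(n)$ but that the quotient $A_k(n)/(n^2/4)$ is \emph{odd}; writing $cA_k(n)=n^k$ as $c\cdot\bigl(A_k(n)/(n^2/4)\bigr)=2^k(n/2)^{k-2}$ then forces $2^k\mid c$, hence $c\ge 2^k$. Combining this with the elementary lower bound $A_k(n)>(n/2)^k$ (a convexity/telescoping estimate of exactly the flavor you used to prove $A_k(n)<n^k/2$) gives $n^k=cA_k(n)>2^k(n/2)^k=n^k$, a contradiction valid for all even $n\ge 4$ and odd $k\ge 3$ at once. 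If you want to salvage your version, note that $T_k(n)\mid 4n^{k-2}$ together with $T_k(n)>4(n/2)^{k-2}\cdot 2^{-k}\cdot 4=n^{k-2}/2^{k-4}$ bounds the cofactor $4n^{k-2}/T_k(n)$ above by a power of $2$ strictly less than what the parity of $T_k(n)$ forces it to be divisible by --- which is precisely the paper's argument; the one piece you are missing is the \emph{oddness} of $4A_k(n)/n^2$, i.e.\ the congruence modulo $n^2/2$ rather than just $n^2/4$.
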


\section{Congruences for power sums}

In order to prove Theorem~\ref{main}, we need two auxiliary results.

\begin{lemma}
\label{L1}
Let $k>1$ and $n\ge 1$ be integers. Then
$$
2S_k(n)\equiv\begin{cases}
-2\sum_{p\mid n,\, (p-1)\mid k}\frac np \pmod{2n} & \text{if $k$ is even,}\\
-kn\sum_{p\mid n,\, (p-1)\mid (k-1)}\frac np \pmod{n^2} & \text{if $k$ is odd.}
\end{cases}
$$
\end{lemma}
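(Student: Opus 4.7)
My plan is to prove the even-$k$ case first and derive the odd-$k$ case from it via a pairing argument. For $k$ even the asserted congruence modulo $2n$ is equivalent, after cancelling a factor of $2$, to the classical Carlitz--von Staudt congruence
$$S_k(n)\equiv -\sum_{p\mid n,\,(p-1)\mid k}\frac{n}{p}\pmod{n}.$$
By the Chinese remainder theorem it suffices to verify this modulo $p^a$ for each prime $p$ and exponent $a$ with $p^a$ exactly dividing $n$. Writing $n=p^a m$ and grouping $\{1,\dots,n\}$ into $m$ complete systems of residues modulo $p^a$ gives
$$S_k(n)\equiv m\sum_{r=1}^{p^a-1}r^k\pmod{p^a},$$
so the problem reduces to evaluating $T_a:=\sum_{r=1}^{p^a-1}r^k$ modulo $p^a$.

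The key fact to establish is that for $k\ge 2$, $T_a\equiv -p^{a-1}\pmod{p^a}$ when $(p-1)\mid k$ and $T_a\equiv 0\pmod{p^a}$ otherwise. Splitting $T_a=U_a+p^k T_{a-1}$ according as $p\nmid r$ or $p\mid r$, and inducting on $a$ (using $k\ge 2$ to kill the second term), reduces this to the analogous claim for the coprime sum $U_a:=\sum_{\gcd(r,p)=1}r^k$. For $p$ odd I would use a primitive root $g$ of $(\mathbb Z/p^a\mathbb Z)^*$: when $(p-1)\nmid k$, a lift $\alpha$ of a primitive root modulo $p$ satisfies $\alpha^k\not\equiv 1\pmod p$, so $\alpha^k-1$ is a unit and the change of variables $r\mapsto \alpha r$ forces $U_a\equiv 0\pmod{p^a}$; when $(p-1)\mid k$, the image of $r\mapsto r^k$ is the $p$-Sylow subgroup $\{1+p^{a-c}y\}$ of some order $p^c$, whose element sum is $\equiv p^c\pmod{p^a}$, and accounting for the multiplicity $(p-1)p^{a-1-c}$ yields $U_a\equiv -p^{a-1}\pmod{p^a}$. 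For $p=2$, where $(p-1)\mid k$ automatically, $(\mathbb Z/2^a\mathbb Z)^*$ fails to be cyclic once $a\ge 3$, so I would induct directly: split the odd residues modulo $2^a$ into the halves $[1,2^{a-1}]$ and $[2^{a-1}+1,2^a]$ and expand $(2^{a-1}+j')^k$ by the binomial theorem, noting that the linear correction term carries a factor of $k$ which vanishes modulo $2$ precisely because $k$ is even.

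With the even-$k$ case in hand, the odd-$k$ case follows by pairing $j$ with $n-j$. For $k$ odd, the binomial expansion yields
$$j^k+(n-j)^k\equiv kn\,j^{k-1}\pmod{n^2},$$
since the constant term cancels (as $(-j)^k=-j^k$) and all terms of order $\ge 2$ in $n$ are absorbed modulo $n^2$. Summing over $1\le j\le n-1$ gives $2S_k(n)\equiv kn\,S_{k-1}(n)\pmod{n^2}$; since $k-1$ is even and $\ge 2$, the already-proven even case determines $S_{k-1}(n)$ modulo $n$, and multiplying through by $kn$ delivers the claimed congruence modulo $n^2$. The main technical hurdle throughout is the evaluation of $T_a$ when $p=2$, where the slick primitive-root argument from the odd-prime case is unavailable and a more delicate inductive computation is required; everything else is a mechanical assembly from the two standard ingredients above.
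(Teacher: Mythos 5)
Your proof is correct, but there is nothing in the paper to compare it against: the paper does not prove Lemma~\ref{L1} at all, attributing the even case to von Staudt (with a short proof by Moree) and the odd case to Me\v strovi\'c, and using both as black boxes. Your even-case argument is essentially the standard proof of the Carlitz--von Staudt congruence: reduce by CRT to $p^a\parallel n$, collapse $S_k(n)$ to $m\,T_a$ with $T_a=\sum_{r=1}^{p^a-1}r^k$, strip off the multiples of $p$ using $k\ge 2$, and evaluate the unit sum by a primitive-root/averaging argument for odd $p$ and a direct binomial induction for $p=2$; I checked the Sylow computation (the image of the $k$-th power map is a subgroup of order $p^c$ \emph{inside} the $p$-Sylow subgroup, each element hit $(p-1)p^{a-1-c}$ times, giving $U_a\equiv(p-1)p^{a-1}\equiv-p^{a-1}\pmod{p^a}$) and the $p=2$ recursion $U_a\equiv 2U_{a-1}\pmod{2^a}$, and both are sound. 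Your derivation of the odd case is the nicest part: pairing $j$ with $n-j$ and expanding binomially gives $2S_k(n)\equiv kn\,S_{k-1}(n)\pmod{n^2}$, and since $k-1\ge 2$ is even the already-proved case finishes it. This is considerably lighter than Me\v strovi\'c's route, which proves a sharper congruence modulo $n^3$; the price is that you only get the modulus $n^2$ actually stated in the lemma, which is all the paper needs. In short: a correct, self-contained proof of a result the paper only cites.
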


The ``even" part of Lemma~\ref{L1} is due to von Staudt~\cite{staudt}. A simple proof of this result has been given by Moree~\cite{moree1}. The ``odd" part of Lemma~\ref{L1} is due to Me\v strovi\'c~\cite{mestrovic}.

For a real number $r$, let $\lfloor r\rfloor$ denote the greatest integer less than or equal to $r$.

\begin{lemma}
\label{L2}
Let $k>1$ and $n>1$ be integers. Then
$$
A_k(n)\equiv\begin{cases}
0\pmod{n(n-1)/2} & \text{if $k$ is even and $n$ is odd,}\\
0\pmod{n-1} & \text{if $k$ and $n$ are even,}\\
\lfloor n/2\rfloor^2\pmod{2\lfloor n/2\rfloor^2} & \text{if $k$ is odd.}
\end{cases}
$$
\end{lemma}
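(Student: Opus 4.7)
The plan is to rewrite $A_k(n)$ in terms of the ordinary power sums $S_k$ and then apply Lemma~\ref{L1}. Splitting the alternating sum by parity of the summation index gives
\[
A_k(n)=\begin{cases}2^{k+1}S_k((n+1)/2)-S_k(n),&n\text{ odd},\\ S_k(n)-2^{k+1}S_k(n/2),&n\text{ even},\end{cases}
\]
and I would also use the elementary identity $A_k(n-1)+A_k(n)=(n-1)^k$, obtained by collapsing the two alternating sums into one.

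For $k$ even and $n$ odd, I would prove the divisibility by $n$ and by $m:=(n-1)/2$ separately (these are coprime). Modulo $n$, pairing $j\leftrightarrow n-j$ in $A_k(n)=\sum_{j=1}^{n-1}(-1)^j j^k$ and using $(n-j)^k\equiv j^k\pmod n$ (which is where $k$ even enters) produces the desired cancellation. Modulo $m$, the displayed identity together with $m^\ell\equiv 0\pmod m$ for $\ell\ge 1$ reduces everything to $A_k(n)\equiv 2(2^k-1)S_k(m)\pmod m$; applying Lemma~\ref{L1} and Fermat's little theorem (which forces $p\mid 2^k-1$ for every odd prime $p\mid m$ with $(p-1)\mid k$, while the explicit factor $2$ handles $p=2$) makes every summand vanish. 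The companion case $k$ even, $n$ even then follows at once from $A_k(n)=(n-1)^k-A_k(n-1)$, since both terms on the right are divisible by $n-1$.

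The $k$ odd case is the heart of the argument. I would first treat $n=2m$ even, and then deduce the $n=2m+1$ odd statement from $A_k(2m+1)=(2m)^k-A_k(2m)$ using $(2m)^k\equiv 0\pmod{2m^2}$ (valid for $k\ge 2$) together with $-m^2\equiv m^2\pmod{2m^2}$. For $n=2m$, applying Lemma~\ref{L1} to both $S_k(2m)$ and $S_k(m)$ and substituting into $2A_k(2m)=2S_k(2m)-2^{k+2}S_k(m)$ yields, modulo $4m^2$,
\[
2A_k(2m)\equiv -2km\sum_{\substack{p\mid 2m\\(p-1)\mid k-1}}\frac{2m}{p}+2^{k+1}km\sum_{\substack{p\mid m\\(p-1)\mid k-1}}\frac{m}{p}.
\]
The prime $p=2$ always appears in the first sum (as $(p-1)=1$ divides anything), contributing $m$ and producing the term $-2km^2$. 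For every odd prime $p$ occurring in either sum, Fermat gives $p\mid 2^{k-1}-1$, which makes its contribution a multiple of $4m^2$; any contribution from $p=2$ in the second sum (possible only when $m$ is even) equals $2^k km^2$, itself a multiple of $4m^2$ for $k\ge 2$. Hence $2A_k(2m)\equiv -2km^2\pmod{4m^2}$, and since $k$ is odd one has $-2km^2\equiv 2m^2\pmod{4m^2}$, giving $A_k(2m)\equiv m^2\pmod{2m^2}$.

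The main obstacle is this last step: Lemma~\ref{L1} for odd exponent provides information only modulo $n^2$, so the whole computation has to be carried out on $2A_k(2m)$ modulo $4m^2$ rather than on $A_k(2m)$ modulo $2m^2$. The specific contribution of $p=2$ in $\sum_{p\mid 2m}2m/p$ is the source of the residue $m^2$, and verifying that every other summand drops out mod $4m^2$ (using both Fermat's theorem and the bound $2^k km^2\equiv 0\pmod{4m^2}$) is the crucial bookkeeping step.
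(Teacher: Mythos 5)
Your proof is correct and follows essentially the same route as the paper: split the alternating sum by parity of the summation index to express $A_k(n)$ through ordinary power sums, invoke Lemma~\ref{L1}, and use Fermat's little theorem to annihilate every term except the $p=2$ contribution, which is exactly what produces the residue $\lfloor n/2\rfloor^2$ in the odd-$k$ case. The only (harmless) difference is organizational: for odd $k$ you establish the even-$n$ case first by working with $2A_k(2m)$ modulo $4m^2$ and then pass to odd $n$ via $A_k(2m+1)=(2m)^k-A_k(2m)$, whereas the paper treats odd $n$ first and transfers to even $n$ by the same recursion.
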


\begin{proof}
First assume that $n\ge 3$ is odd. We have
\begin{align*}
A_k(n)&=\sum_{\substack{j=1\\ 2\mid j}}^{n-1} j^k-\sum_{\substack{j=1\\ 2\nmid j}}^{n-1} j^k=2\sum_{\substack{j=1\\ 2\mid j}}^{n-1} j^k-\sum_{j=1}^{n-1} j^k=2^{k+1}\sum_{i=1}^{(n-1)/2} i^k-S_k(n)\\
&=2^{k+1}S_k((n-1)/2)+2(n-1)^k-S_k(n-1)-(n-1)^k.
\end{align*}
Hence
\begin{equation}
\label{eq1}
A_k(n)\equiv 2^{k+1}S_k((n-1)/2)-S_k(n-1)\pmod{(n-1)^2/2}.
\end{equation}
Applying Lemma~\ref{L1}, we obtain, for even $k$,
\begin{align*}
A_k(n)&\equiv -2^{k+1}\sum_{\substack{p\mid\frac{n-1}2\\ (p-1)\mid k}} \frac{n-1}{2p}+\sum_{\substack{p\mid (n-1)\\ (p-1)\mid k}} \frac{n-1}p\\
&\equiv (1-2^k)\sum_{\substack{p\mid (n-1)\\ (p-1)\mid k\\ p>2}} \frac{n-1}p\equiv (n-1)\sum_{\substack{p\mid (n-1)\\ (p-1)\mid k\\ p>2}} \frac{1-2^k}p\pmod{(n-1)/2}.
\end{align*}
By Fermat's little theorem, $p\mid(1-2^k)$ for every prime $p>2$ with ${(p-1)\mid k}$. Thus $\sum_{p\mid (n-1),\, (p-1)\mid k,\, p>2} (1-2^k)/p$ is an integer, and so  $A_k(n)\equiv 0\pmod{(n-1)/2}$. Moreover, if $k$ is even, then 
$$
A_k(n)=\sum_{j=1}^{n-1} (-1)^j j^k=\sum_{j=1}^{n-1}(-1)^{n-j} (n-j)^k\equiv  -A_k(n)\pmod{n}.
$$
Hence $2A_k(n)\equiv 0\pmod{n}$, and thus $A_k(n)\equiv 0\pmod{n}$. Since $(n-1)/2$ and $n$ are coprime, we conclude that $A_k(n)\equiv 0\pmod{n(n-1)/2}$.

Similarly, if $k\ge 3$ is odd, then \eqref{eq1} and Lemma~\ref{L1} imply
\begin{align*}
A_k(n)&\equiv -2^k k\cdot\frac{n-1}2\sum_{\substack{p\mid\frac{n-1}2\\ (p-1)\mid (k-1)}} \frac{n-1}{2p}+k\cdot\frac{n-1}2\sum_{\substack{p\mid (n-1)\\ (p-1)\mid (k-1)}} \frac{n-1}p\\
& \equiv k\cdot\frac{(n-1)^2}4+(1-2^{k-1})k\cdot\frac{n-1}2\sum_{\substack{p\mid (n-1)\\ (p-1)\mid (k-1)\\ p>2}} \frac{n-1}p\\
&\equiv \frac{(n-1)^2}4+k\cdot\frac{(n-1)^2}2\sum_{\substack{p\mid (n-1)\\ (p-1)\mid (k-1)\\ p>2}} \frac{1-2^{k-1}}p\pmod{(n-1)^2/2}.
\end{align*}
Hence, again by Fermat's little theorem, $A_k(n)\equiv (n-1)^2/4\pmod{(n-1)^2/2}$.

Now assume that $n$ is even. Since the case $n=2$ is trivial, we can assume that $n\ge 4$. Then $n-1\ge 3$ and $n+1$ are odd. By what has been proved above,
\begin{align*}
A_k(n-1)&\equiv 0\pmod{n-1} && \text{if $k$ is even,}\\
A_k(n+1)&\equiv n^2/4\pmod{n^2/2} && \text{if $k$ is odd.}
\end{align*}
Recalling that $k>1$, we find that 
\begin{align*}
A_k(n)&=(n-1)^k-A_k(n-1)\equiv 0\pmod{n-1} && \text{if $k$ is even,}\\
A_k(n)&=n^k-A_k(n+1)\equiv -n^2/4\equiv n^2/4\pmod{n^2/2} && \text{if $k$ is odd.}
\end{align*}
This completes the proof.
\end{proof}

\section{Proof of the main result}

First we note that $A_k(3)/A_k(2)=2^k-1$ is an integer for every $k$. Further, it is easy to see that $A_1(n)=\lfloor n/2\rfloor$. Thus, for $n>1$,
$$
\frac{A_1(n+1)}{A_1(n)}=\begin{cases}
1 & \text{if $n$ is even,}\\
(n+1)/(n-1) & \text{if $n$ is odd.}
\end{cases}
$$
Hence $A_1(n+1)/A_1(n)$ is an integer if and only if either $n$ is even or $n=3$.

Now suppose that $k>1$, $n>2$ and the ratio $A_k(n+1)/A_k(n)$ is an integer. Since $A_k(n+1)=n^k-A_k(n)$, we must have $cA_k(n)=n^k$ for some positive integer~$c$. If both $k$ and $n$ are even, then, by Lemma~\ref{L2}, $n-1$ divides $n^k$, which is impossible for $n>2$. If $n$ is odd, then, again by Lemma~\ref{L2}, $(n-1)/2$ divides $n^k$. This implies that $n=3$ and $c(2^k-1)=3^k$, and hence $2^k-1=3^m$ for some positive integer $m$. If $k\ge 3$, then we must have $3^m\equiv -1\pmod{8}$, which is impossible. Hence $k=2$ and $m=1$.

It remains to examine the case when $k\ge 3$ is odd and $n\ge 4$ is even. In this case Lemma~\ref{L2} yields
$A_k(n)\equiv n^2/4\pmod{n^2/2}$. Hence
$$
\frac{A_k(n)}{n^2/4}\equiv 1\pmod{2}.
$$
Writing $cA_k(n)=n^k$ in the form
$$
c\cdot \frac{A_k(n)}{n^2/4}=2^k\left(\frac n2\right)^{k-2},
$$
we conclude that $2^k$ divides $c$, and so $c\ge 2^k$. Furthermore, since $n\ge 4$ is even,
\begin{align*}
A_k(n)&=1+\sum_{j=1}^{(n-2)/2} \bigl((2j+1)^k-(2j)^k\bigr)=1+\sum_{j=1}^{(n-2)/2} j^k\left(\Bigl(2+\frac 1j\Bigr)^k-2^k\right)\\
&>1+\sum_{j=1}^{(n-2)/2} j^k\left(\Bigl(1+\frac 1j\Bigr)^k-1^k\right)=1+\sum_{j=1}^{(n-2)/2} \bigl((j+1)^k-j^k\bigr)\\
&=\left(\frac n2\right)^k.
\end{align*}
Consequently, $n^k=cA_k(n)>2^k\cdot(n/2)^k=n^k$, and this is the desired contradiction.

\end{document}